\documentclass[12pt,a4paper]{amsart}
\usepackage[margin=2.5cm]{geometry}
\usepackage{tikz,tkz-graph}
\usepackage{amsfonts,amsthm,amsmath}
\usepackage{amssymb}
\usepackage[utf8]{inputenc}
\usepackage{epic}
\usepackage{enumerate}
\usepackage{mathtools}
\usepackage{graphicx}
\usepackage{hyperref}
\usepackage{nicefrac}

\numberwithin{equation}{section}
\newtheorem{thm}{Theorem}[section]
\newtheorem{lemma}{Lemma}[section]

\theoremstyle{definition}
\newtheorem{definition}{Definition}[section]
\newtheorem{example}{Example}[section]

\theoremstyle{remark}
\newtheorem{remark}{Remark}[section]

\newtheorem*{remark*}{Remark}

\DeclareMathOperator{\Hom}{{\rm Hom}}

\newcommand{\Q}{\mathbb{Q}}
\newcommand{\Z}{\mathbb{Z}}
\newcommand{\N}{\mathbb{N}}
\newcommand{\Se}{\mathcal{S}}
\newcommand{\cS}{\mathcal{S}}
\newcommand{\cP}{\mathcal{P}}
\newcommand{\calv}{\mathcal{V}}
\newcommand{\al}{\alpha}
\newcommand{\w}{\omega}

\begin{document}

\title[Representability of SPM numerical semigroups]{Representability of systems of proportionally modular numerical semigroups}

\author[Zs. Baja]{Zsolt Baja}
\address{Babe\c{s}-Bolyai University, Faculty of Mathematics and Computer Science,\newline \hspace*{6mm} 
Str. Mihail Kog\u{a}lniceanu nr. 1, 400084 Cluj-Napoca, Romania}
\email{zsolt.baja@ubbcluj.ro}

\author[T. L\'aszl\'o]{Tam\'as L\'aszl\'o}
\address{Babe\c{s}-Bolyai University, Faculty of Mathematics and Computer Science,\newline \hspace*{6mm} 
Str. Mihail Kog\u{a}lniceanu nr. 1, 400084 Cluj-Napoca, Romania}
\email{tamas.laszlo@ubbcluj.ro}

\author[Zs. Nagy]{Zsuzsa Nagy}
\address{Babe\c{s}-Bolyai University, Faculty of Mathematics and Computer Science,\newline \hspace*{6mm} 
Str. Mihail Kog\u{a}lniceanu nr. 1, 400084 Cluj-Napoca, Romania}
\email{zsuzsa.nagy3@stud.ubbcluj.ro}

\subjclass[2010]{Primary. 20M14, 32S05, 32S25, 32S50
Secondary. 14B05}
\keywords{system of proportionally modular numerical semigroups, weighted homogeneous surface singularities, canonical equivariant resolution graph, quotient semigroup}

\thanks{Zs.B. was supported by the project `Singularities and Applications' - CF 132/31.07.2023 funded by the European Union - NextGenerationEU - through Romania's National Recovery and Resilience Plan.\\
T.L. is supported by the `J\'anos Bolyai Research Scholarship' of the Hungarian Academy of Sciences and partially supported by NKFIH Grant `\'Elvonal (Frontier)' KKP 144148. \\
Zs.N. was supported by  the `M\'arton \'Aron Scholarship' given by the Ministry of Foreign Affairs of Hungary, by the `Special Fellowship for Scientific Activities' provided by the Babe\c s-Bolyai University, and by the `KING CAROL I' Research Fellowship granted by the National Research Authority, no.  18RCI/18.12.2025. She was also partially supported by the Hungarian University Federation of Cluj and the BGA Zrt.}

\date{}

\begin{abstract}
In this short note we prove that every system of proportionally modular numerical semigroups is representable by a canonical equivariant resolution of a weighted homogeneous surface singularity with rational homology sphere link.  The construction starts from the quotient descriptions of proportionally modular numerical semigroups by two-generator numerical semigroups, realizes each quotient by a two-legged canonical equivariant resolution graph, and then glues these graphs with suitable multiplicities.  
\end{abstract}

\maketitle

\section{Introduction}

Numerical semigroups arising from the graded structure of the coordinate ring of normal weighted homogeneous surface singularities form an interesting and deeply active intersection between algebraic geometry, low-dimensional topology and combinatorics, cf. \cite{LN_sf,BL_flat,BLNgenus,BLN_PM}. When the link of the weighted homogeneous surface singularity is a rational homology sphere, this numerical semigroup is intricately governed by the topological Seifert structure. 

More precisely, one associates to a canonical equivariant resolution graph $\Gamma$, encoded by the Seifert invariants $(-b_0;(\al_i,\w_i)_{i=1}^d)$, a quasi-linear function 
\begin{equation}\label{defN}
N(\ell) = b_0 \ell - \sum_{i=1}^{d} \Big\lceil \frac{\ell\omega_i}{\alpha_i} \Big\rceil
\end{equation}
which counts the dimension of the homogeneous pieces of the graded ring, see section \ref{ss:reprsem}. Consequently, the corresponding numerical semigroup is completely determined by the Seifert invariants via the condition $N(\ell) \geq 0$. 
We call such a  numerical semigroup a \textit{representable semigroup}. That is they can be realized as the semigroup $\mathcal{S}_\Gamma$ associated with some canonical equivariant resolution graph $\Gamma$ of a normal weighted homogeneous surface singularity.

The study of representable numerical semigroups has received  attention recently, as it can be used to approach classical problems such as the Frobenius problem or developing formulas for the genus of numerical semigroups. The main problem of this topic is clearly the representability question, which seeks to answer whether all semigroups are representable.

On the other hand, this study has structural parallels with classes of semigroups arising purely from combinatorial and Diophantine contexts. An important class among these is the class of proportionally modular (PM) numerical semigroups, introduced by Rosales, Garc\'ia-S\'anchez, Garc\'ia-Garc\'ia, and Urbano-Blanco \cite{Garcia2002}. Defined as the set of non-negative integer solutions to a Diophantine inequality of the form $ax \pmod b \leq cx$, these semigroups appear natively in connection with the study of affine full semigroups \cite{delgado2008, RosalesUrbano2006}. A fundamental result by Rosales and Urbano-Blanco \cite{RosalesUrbano2006} establishes that every PM numerical semigroup can be represented as a quotient $\langle \alpha_1, \alpha_2 \rangle / n$ of a numerical semigroup generated by two relatively prime integers. 

Geometrically, it was shown in \cite{BLN_PM} that PM semigroups are perfectly compatible with two-legged canonical equivariant resolution graphs, meaning that they are representable by these Seifert structures given on cyclic quotient singularities.
\vspace{0.2cm}

A natural generalization of these semigroups is a system of proportionally modular (SPM) numerical semigroups, which arises as the solution set to a finite system of simultaneous proportionally modular inequalities \cite{Garcia2002}. Combinatorially, an SPM semigroup $\mathcal{S}$ is expressed as the intersection of finitely many PM semigroups $\cP_i$, $\mathcal{S} = \mathcal{P}_1 \cap \dots \cap \mathcal{P}_r$. Thanks to a structural refinement known as the Toms decomposition -- originally motivated by the classification of $\mathbb{C}^*$-algebras \cite{Toms2003} and adapted by the work of \cite{GarciaSanchezRosales2008} -- any SPM numerical semigroup can be rewritten using a unified denominator $n$ as 
$$
\mathcal{S} = \frac{1}{n} \bigcap_{i=1}^{r} \langle \alpha_{i1}, \alpha_{i2} \rangle
$$
While individual PM blocks are well-understood in the language of two-legged canonical equivariant resolution graphs, understanding the representability of their intersection introduces significant challenges. In general, it is not known that the intersection of representable semigroups is representable, as the interplay between the quasi-linear functions of different configurations can be highly non-trivial, see \cite{BL_flat}.

The aim of this paper is to bridge this gap in this case by proving the representability for any system of proportionally modular numerical semigroups. We use quotient representations of the individual PM blocks and take a sum of the associated graphs, c.f. section \ref{ss:sumgraph}. By carefully assigning multiplicities to each graph in the sum, we can control the behavior of the combined quasi-linear function. 
Using this idea, we will prove the main result of the article:

\begin{thm}\label{thm:main}
Every system of proportionally modular numerical semigroups is representable.
\end{thm}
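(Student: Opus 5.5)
The plan is to use the Toms decomposition recalled in the introduction: write the given SPM semigroup as
$$\mathcal{S}=\frac{1}{n}\bigcap_{i=1}^{r}\langle \alpha_{i1},\alpha_{i2}\rangle=\bigcap_{i=1}^r \mathcal{P}_i,\qquad \mathcal{P}_i=\langle \alpha_{i1},\alpha_{i2}\rangle/n .$$
Then realize each block $\mathcal{P}_i$ by a two-legged canonical equivariant resolution graph $\Gamma_i$, as in \cite{BLN_PM}. Let $\Gamma_i$ have Seifert invariants $(-b^{(i)}_0;(\al_{i1}',\w_{i1}),(\al_{i2}',\w_{i2}))$. Its counting function $N_i$, given by \eqref{defN}, then satisfies $\mathcal{P}_i=\{\ell\ge 0: N_i(\ell)\ge 0\}$.

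Next, form the sum graph of section \ref{ss:sumgraph}, taking $\Gamma_i$ with multiplicity $m_i\ge 1$. This means the central vertices are identified, the central decoration becomes $b_0=\sum_i m_i b^{(i)}_0$, and all legs are kept, the legs of $\Gamma_i$ appearing $m_i$ times. The result is again a star-shaped graph with negative orbifold Euler number, since the Euler numbers add. So it is a canonical equivariant resolution graph of a weighted homogeneous singularity with rational homology sphere link, and directly from \eqref{defN} its function is $N=\sum_i m_iN_i$. It then remains to choose the $\Gamma_i$ and the $m_i$ so that $N(\ell)\ge 0$ holds if and only if $N_i(\ell)\ge 0$ for every $i$.

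The implication ``$\Leftarrow$'' is automatic, so all the work is in ``$\Rightarrow$''. Here I would exploit finiteness. Let $F$ be the Frobenius number of $\mathcal{S}$. For $\ell>F$, every $\ell$ lies in every $\mathcal{P}_i$, so there is nothing to check. Only the finitely many $\ell\le F$ with $\ell\notin\mathcal{S}$ matter, and for each of them we must force $N(\ell)<0$. The key intermediate claim I would aim for concerns the representation of a single quotient $\langle \alpha_1,\alpha_2\rangle/n$. The claim is that it can be chosen so that $N_i(\ell)\in\{-1,0\}$ for all $0<\ell\le F$, and more generally that $N_i(\ell)\ge -1$ everywhere. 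Given this, for $0<\ell\le F$ the sum $N(\ell)=\sum m_iN_i(\ell)$ is a combination of values in $\{-1,0\}$. Hence $N(\ell)\ge 0$ iff all $N_i(\ell)=0$, which in turn holds iff $\ell\in\bigcap\mathcal{P}_i$. In this form even $m_i=1$ suffices, and the multiplicities are only used to adjust the graphs so that the claim holds.

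The main obstacle is this intermediate claim. A naive multiplicity argument cannot work, because some $N_j(\ell)=-1$ could be compensated by large positive values $N_i(\ell)$, and weights cannot dominate symmetrically in all indices. To prove the claim I would first try to exploit the non-uniqueness of the quotient description. One can replace $n$ and $\alpha_1,\alpha_2$ by an equivalent presentation of the same PM semigroup with much larger parameters. The aim is to make the slope $b_0-\w_1/\al_1-\w_2/\al_2=-e$ of $N_i$ so small that $N_i(\ell)<1$ on the whole range $[0,F]$. Since $N_i(\ell)\ge -e\,\ell-2$ and $N_i(\ell)\le -e\,\ell$, this confines $N_i$ to $\{-2,-1,0\}$ there. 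I would then use the explicit continued-fraction form of the legs from \cite{BLN_PM} to exclude the value $-2$ (or check that the argument above tolerates it). If shrinking the slope for one block is not enough, the multiplicities $m_i$ give the fallback. Taking $m_i$ equal to a common multiple, combined with the choice of presentations, should make the negative contributions at every $\ell\notin\mathcal{S}$, $\ell\le F$, outweigh the bounded positive ones.
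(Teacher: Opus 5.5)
You have the same architecture as the paper: PM blocks, two-legged representations, a sum with multiplicities, and $N_{\Gamma_i}\ge -1$. There is a genuine gap, though. The step that carries all the weight is your claim that each block has a two-legged representation with $N_i(\ell)\in\{-1,0\}$ for $0<\ell\le F$, so that $m_i=1$ suffices. That claim is false, and changing the quotient presentation cannot rescue it.

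The slope of a two-legged representation is bounded below by the semigroup itself. If $\cP\neq\N$, then $N(1)=b_0-2<0$ forces $b_0=1$. Now suppose $p,q\in\cP$ are coprime, and write $x_i=\w_i/\al_i$, $a=\lceil px_1\rceil\in[1,p-1]$ and $c=\lceil qx_1\rceil$. The conditions $N(p)\ge 0$ and $N(q)\ge 0$ give $x_1\le\min(a/p,c/q)$ and $x_2\le\min(1-a/p,1-c/q)$. Hence $|e|\ge|a/p-c/q|\ge 1/(pq)$, since $p\nmid a$.

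Take the paper's last example, the Toms decomposition $\cS=\langle 20,21\rangle\cap\langle 6,35\rangle$, with $n=1$ and $F(\cS)=379$. Run the same estimate at $\ell=216=36\cdot 6$, using $216/210>1$. It shows that \emph{every} two-legged representation of $\langle 6,35\rangle$ has $N_2(216)\ge 1$. On the other hand, $216\notin\langle 20,21\rangle$ forces $N_1(216)=-1$ for every two-legged representation of the first block. So your claim fails for the block $\langle 6,35\rangle$. Moreover, with $m_1=m_2=1$ you get $216\in\cS_\Gamma\setminus\cS$ whichever two-legged representations you pick. This is exactly why the paper notes that $q_1\ge 2$ is necessary there.

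Your fallback does not close the gap. Taking all $m_i$ equal to a common multiple only rescales $N$ and never changes its sign. You observed yourself that no weighting can dominate symmetrically.

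The missing idea is an asymmetric one.
\begin{itemize}
\item Order the blocks so that $|e_1|\le\dots\le|e_r|$.
\item If $N_j(\ell)=-1$, then $\ell|e_j|$ equals $-1$ plus the two defects $\lceil \ell\w_{ji}/\al_{ji}\rceil-\ell\w_{ji}/\al_{ji}\in[0,1)$, so $0<\ell|e_j|<1$.
\item Combined with $N_t(\ell)\le \ell|e_t|$, this gives $N_t(\ell)\le 0$ for $t<j$, and $N_t(\ell)\le k_{jt}=\lceil |e_t|/|e_j|\rceil-1$ for $t>j$.
\end{itemize}
So a gap of block $j$ can only be compensated by blocks with larger $|e_t|$, and only by a bounded amount. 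The triangular choice $q_r=1$, $q_j=1+\sum_{t>j}k_{jt}q_t$ then yields $N_\Gamma(\ell)\le -q_j+\sum_{t>j}k_{jt}q_t=-1$ at every such $\ell$.

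A side remark: the part $N_i\ge -1$ of your claim needs no continued-fraction analysis. It is automatic for two legs because $\gamma<0$ (Lemma~\ref{lem:N=-1}).
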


The paper is organized as follows. In section \ref{s:WHS}, we review the foundational geometry and topology of weighted homogeneous surface singularities, their canonical equivariant resolution graphs, and the representable numerical semigroups. Section \ref{s:Ssf} outlines the definitions and properties of PM and SPM numerical semigroups, emphasizing the properties of the Toms decompositions. Section \ref{s:main} is dedicated to the proof of Theorem~\ref{thm:main}, where we explicitly construct the  sum of the quotient graphs with multiplicities and present concrete examples illustrating the result. Finally, we conclude with open questions regarding potential topological characterizations of these specific classes of star-shaped resolution graphs appearing in this context.

\section{Preliminaries: weighted homogeneous surface singularities and representable numerical semigroups}\label{s:DFP}

\subsection{On weighted homogeneous surface singularities}\label{s:WHS}
\subsubsection{\bf Definitions and notations}\label{ss:introWHS} A weighted homogeneous surface singularity $(X,0)$ is the germ at the origin of an affine surface $X$ with a good and effective  $\mathbb{C}^*$-action. In particular, its affine coordinate ring $R_X=\oplus_{\ell\geq 0} R_{X,\ell}$ is $\mathbb{Z}_{\geq 0}$-graded. We will assume further that $(X,0)$ is normal. 

There exists a modification $T\to X$ and a Seifert line bundle $T\to E_0$ with zero section $E_0$, where $E_0:=(X\setminus\{0\})/\mathbb{C}^*$ is a smooth compact curve and $T$ has at most a finite number of cyclic quotient singularities at the intersection of $E_0$ with each singular fiber. After resolving these singularities, we get a {\em canonical equivariant resolution} $\pi:\widetilde X\to X$ whose exceptional divisor $E:=\pi^{-1}(0)$ is a normal crossing divisor with a central irreducible component $E_0$, which can have self-intersection number $-1$.

The canonical equivariant resolution $\pi$ is a good resolution, however it is not necessarily the minimal good resolution. For example, in the case when $(X,0)$ is a cyclic quotient singularity, the $\mathbb{C}^*$-action/Seifert structure is not unique.  

Let $\Gamma$ be the dual graph of the resolution $\pi$, called the \emph{canonical equivariant resolution graph}. Then, by the above, $\Gamma$ is a `star-shaped' graph with central vertex $v_0$ and $d\ge 0$ legs (chain of vertices) connected to it.


The $\mathbb{C}^*$-action induces an $S^1$--Seifert action on the link $M$ of the singularity. In particular, $M$ is a negative definite Seifert 3-manifold characterized by its normalized Seifert invariants $Sf=(-b_0,g;(\alpha_i,\omega_i)_{i=1}^d).$
Each leg is determined by a pair of integers  $(\alpha_i,\omega_i)$,
where $0<\omega_i <\alpha_i$ and $\gcd(\alpha_i,\omega_i)=1$. The $i^{\mathrm{th}}$ leg has $\nu_i$ vertices, say $v_{i1},\ldots, v_{i\nu_i}$ ($v_{i1}$ is connected to the central vertex $v_0$)
with Euler decorations (self-intersection numbers)
$-b_{i1},\ldots, -b_{i\nu_i}$. This is encoded by the Hirzebruch--Jung (negative) continued fraction expansion
$$ \alpha_i/\omega_i=[b_{i1},\ldots, b_{i\nu_i}]=
b_{i1}-1/(b_{i2}-1/(\cdots -1/b_{i\nu_i})\dots) \ \  \ \ (b_{ij}\geq 2).$$
All these vertices (except $v_0$) have genus--decorations zero. The central vertex $v_0$ corresponds to the central genus $g$ curve $E_0$ with self-intersection number $-b_0$.
It will be also useful to define the integer $\omega_i'$ satisfying 
\begin{equation}\label{eq:w'}
\omega_i\omega_i'\equiv 1 \, (\mathrm{mod} \,\alpha_i), \ \ 0< \omega_i'<\alpha_i.
\end{equation}
One can prove that $\alpha_i=\det(\Gamma^i)$, the determinant of the $i^{th}$--leg $\Gamma^i$, $\omega_i=\det(\Gamma^i\setminus v_{i1})$ and $\omega_i'=\det(\Gamma^i\setminus v_{i\nu_i})$. 

For our purpose we always assume that the link $M$ is a  (Seifert) rational homology sphere, or equivalently, the curve $E_0$ has $g=0$. Therefore, as a simplification, for the Seifert invariants we will only write $Sf=(-b_0;(\alpha_i,\omega_i)_{i=1}^d)$. 

For more details on this section we refer to \cite{OW,JanNeu}.

\subsubsection{\bf Lattices, cycles and numerical invariants}\label{ss:numinv}
In general, the resolution space $\widetilde X$ can be regarded as the plumbed $4$-manifold associated with $\Gamma$ with boundary $M$. 
We define the lattice $L$ as $H_2(\tilde X,\Z)$, endowed with the non-degenerate negative definite intersection form $(\, , \,)$. If $\{E_v\}_{v\in }$ are irreducible exceptional divisors, then $L$ is freely generated by the (classes of) $E_v$, that is, $L=\oplus_{v\in \calv} \Z\langle E_v \rangle$. The elements of $L$ are called (integral) cycles. 

The dual lattice $L':= \Hom(H_2(\widetilde X,\Z),\Z)$ can be identified with $H_2(\widetilde{X}, M, \Z)$. Moreover, the quotient is a finite group $L'/L\cong H_1(M,\Z)$, which will be denoted by $H$. Since the intersection form is non--degenerate, $L'$ embeds into the lattice of rational cycles $ L_{{\mathbb Q}}:=L\otimes {\mathbb Q}$, and it can be identified with the rational cycles satisfying $\{l'\in L_{{\mathbb Q}}\,:\, (l',L)_{{\mathbb Q}}\subset \Z\}$, where $(\,,\,)_{{\mathbb Q}}$ denotes the extension of the intersection form to $L_{{\mathbb Q}}$.
 Hence, we can regard $L'$ as $\oplus_{v\in \calv} \Z\langle E^*_v \rangle$, the lattice generated by the (anti-)dual cycles $E^*_v\in L_{{\Q}}$, $v\in \calv$, where $(E_u^*,E_v)_{{\Q}}=-\delta_{u,v}$ (Kronecker delta) for any $u,v\in \calv$.

We define the anti-canonical cycle $Z_K\in L'$ as the unique solution of the adjunction equations $(Z_K,E_v)=-b_v+2$ for all $v$. We can also express $Z_K$ as  
 \begin{equation}\label{ZK}
 Z_K-E=\sum_{v\in\mathcal{V}}(\delta_v-2)E^*_v,
 \end{equation}
where we denote $E:=\sum_{v\in\mathcal{V}}E_v$ and $\delta_v$ is the valency of the vertex $v$.
\vspace{0.2cm}

Let us assume that $M$ is a negative definite Seifert rational homology sphere as in the previous section \ref{ss:introWHS}. One defines the orbifold Euler number of  $M$ as $e:=-b_0+\sum_i\omega_i/\alpha_i$. Then the negative definiteness of the intersection form is equivalent to $e<0$. Furthermore, using the notation  $\alpha:=\mathrm{lcm}(\alpha_1,\ldots,\alpha_d)$, one can express the order of the group $H$ and the class of $[E^*_0]\in H$ as follows:
\begin{equation}\label{eq:sei2}
 |H|=\alpha_1\cdots\alpha_d|e| \ \ \mbox{and} \ \ \textrm{ord}([E^*_0])=\alpha|e|.
\end{equation}
In particular, if $M$ is an integral homology sphere then necessarily all $\alpha_i$'s are pairwise relatively prime and (\ref{eq:sei2}) gives the Diophantine equation $(b_0-\sum_i\omega_i/\alpha_i)\alpha=1$ which uniquely determines all $\omega_i$ and $b_0$. 

The $E_0$-coefficient of the dual cycles
 $E^*_v$ associated with the end-vertices (i.e. vertices with $\delta_v=1$) can be computed by $(E_v^*,-E^*_0)=1/(|e|\alpha_v)$, while the $E_0$-coefficient of
 $E^*_0$ is  $(E_0^*,- E^*_0)=1/|e|$.
These, applied to (\ref{ZK}), express the $E_0$-coefficient of $Z_K$ as $\gamma+1$, where
\begin{equation}\label{eq:gamma}
\gamma:=\frac{1}{|e|}\cdot \Big( d-2-\sum_{i=1}^d \frac{1}{\alpha_i}\Big)\in \mathbb{Q}.
\end{equation}
Note that this number $\gamma$  has a central importance in the study of properties of
weighted homogeneous surface singularities or Seifert rational homology spheres, see eg. \cite{nembook,neumann.abel}. 
One can prove  that $\gamma|e|=d-2-\sum_i 1/\alpha_i$ is negative if and only if  $d\leq 2$, or if  $d=3$ and $\sum_i1/\alpha_i>1$ (or, topologically, $\pi_1(M)$ should be  finite). In these cases $(X,0)$ is a quotient singularity, hence rational. 

All the details of this section can be found in \cite{nembook}. See also \cite{neumann.abel,Nfive,NOSZ}.

\subsection{Representable numerical semigroups \cite{LN_sf,BL_flat}}\label{ss:reprsem}
\subsubsection{\bf Definitions}
Let $(X,0)$ be a normal weighted homogeneous surface singularity.  The set $\cS:=\{\ell\in\mathbb{Z}_{\geq 0} | R_{X,\ell}\neq 0\}$ is a numerical semigroup by the grading property. 

Let us consider a canonical equivariant resolution of $(X,0)$ with dual graph $\Gamma$. By \cite{Pinkham1977}, we know that 
the complex structure of $(X,0)$ is completely recovered by the Seifert invariants and the configuration of points $\{P_i:=E_0\cap E_{i1}\}_{i=1}^d \subset E_0$, where $E_{i1}$ is the irreducible component corresponding to $v_{i1}$ in $\Gamma$. Furthermore, there is a {\em Dolgachev--Pinkham--Demazure} formula which expresses the graded ring, using a special divisor  $D^{(\ell)}:=\ell (-E_0|_{E_0})-\sum_{i=1}^d\lceil\ell\omega_i/\alpha_i\rceil P_i$, as 
\begin{equation}\label{eq:DPD}
R_X=\oplus_{\ell\geq 0} R_{X,\ell}=\oplus_{\ell\geq 0}
H^0(E_0,\mathcal{O}_{E_0}(D^{(\ell)})),
\end{equation}
where $\lceil x\rceil$ denotes the smallest integer greater than or equal to $x$.

In our case, when $M$ is a rational homology sphere,  (\ref{eq:DPD}) implies that $\dim(R_{X,\ell})=\max\{0,1+N(\ell)\}$ is topological, where $N(\ell)$
is  the quasi-linear function defined by 
\begin{equation}\label{defN}
N(\ell):=\deg D^{(\ell)}=b_0\ell-\sum_{i=1}^d\Big\lceil \frac{\ell \omega_i}{\alpha_i}\Big\rceil,
\end{equation}
and the semigroup $\cS$ can be described purely with the Seifert invariants as 
\begin{equation}\label{eq:sgptop}
\mathcal{S}_{(X,0)}=\{\ell\in\mathbb{Z} \ | \ N(\ell)\geq 0\}.
\end{equation}

Note that $-\lceil x\rceil \leq -x$ implies $N(\ell)\leq |e|\ell$,
hence  $N(\ell)<0$ for $\ell<0$. 

Note that in this case $\cS$ is associated with the canonical equivariant resolution graph $\Gamma$ (or, equivalently, with the Seifert structure), hence the notation $\cS_{\Gamma}$ will be used. 

A numerical semigroup $\cS$ that can be realized as $\cS_\Gamma$ for some canonical equivariant resolution graph $\Gamma$ is called {\em representable numerical semigroup}. Accordingly, we say that the $\Gamma$ (or the Seifert structure) is a {\it representation} of the numerical semigroup $\cS$.










\subsubsection{\bf Quotient semigroups and the sum of  graphs}\label{ss:sumgraph}

Associated with a numerical semigroup $\cS$ and a positive integer $n\in\mathbb{N}_{>0}$, one defines the quotient semigroup 
\[
   \cS/n:=\{\ell\in\mathbb{N}:n\ell\in\cS\}.
\]
If $\cS=\cS_\Gamma$ is representable and $N(\ell)$ is the quasi-linear function associated with $\Gamma$, then $\cS/n$ can be represented by a new graph $\Gamma^{(n)}$ whose associated quasi-linear function is 
$$N^{(n)}(\ell):=N(n\ell)$$.

Now, let $\Gamma_1$ and $\Gamma_2$ be two canonical equivariant resolution graphs with Seifert invariants
\[
    Sf_1=\left(-b_0; (\al_i,\w_i )_{i=1}^{m_1} \right)
    \quad\text{and}\quad
    Sf_2=\left(-c_0; (\beta_j,w_j )_{j=1}^{m_2} \right).
\]
We define the sum $\Gamma:=\Gamma_1+\Gamma_2$ as the  graph determined by
\[
    Sf=\left(-b_0-c_0; (\al_i,\w_i)_{i=1}^{m_1},(\beta_j,w_j )_{j=1}^{m_2}  \right).
\]
If $e_1,e_2$ and $e$ are the corresponding orbifold Euler numbers, then $e=e_1+e_2<0$, so the construction is well defined.  Moreover, if $N_1,N_2$ and $N$ are the quasi-linear functions associated with $\Gamma_1,\Gamma_2$ and $\Gamma$, respectively, then one gets
\[
   N(\ell)=N_1(\ell)+N_2(\ell).
\]
The following result was proved in \cite{BL_flat}: 
\begin{lemma}
    \label{thm:sumbound}
    If $\Gamma_1$ and $\Gamma_2$ are canonical equivariant resolution graphs then
    \begin{equation}
        \label{eq:sumbound}
        \Se_{\Gamma_1}\cap\Se_{\Gamma_2}\subset 
        \Se_{\Gamma_1+\Gamma_2}
        \subset\Se_{\Gamma_1}+\Se_{\Gamma_2}.        
    \end{equation} 
\end{lemma}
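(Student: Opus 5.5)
The first inclusion is immediate from the additivity of the quasi-linear functions recorded just before Lemma~\ref{thm:sumbound}. By \eqref{eq:sgptop} we have $\Se_{\Gamma}=\{\ell : N(\ell)\ge 0\}$, and $N=N_1+N_2$ for $\Gamma=\Gamma_1+\Gamma_2$. So if $N_1(\ell)\ge 0$ and $N_2(\ell)\ge 0$, then $N(\ell)\ge 0$. No further work is needed for this part.

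For the second inclusion, take $\ell\in\Se_{\Gamma_1+\Gamma_2}$, so $N_1(\ell)+N_2(\ell)\ge 0$, and look for a decomposition $\ell=\ell_1+\ell_2$ with $N_1(\ell_1)\ge 0$ and $N_2(\ell_2)\ge 0$.
\begin{itemize}
\item If $N_1(\ell)\ge 0$ we take $\ell=\ell+0$, and symmetrically if $N_2(\ell)\ge 0$, since $0$ lies in both semigroups.
\item Otherwise, without loss of generality, $N_1(\ell)<0<N_2(\ell)$. I would argue by induction on $\ell$: it suffices to find $0<k\le \ell$ with $k\in\Se_{\Gamma_2}$ and $\ell-k\in\Se_{\Gamma_1+\Gamma_2}$. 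Then the induction hypothesis writes $\ell-k=a+b$ with $a\in\Se_{\Gamma_1}$, $b\in\Se_{\Gamma_2}$, and $\ell=a+(b+k)$ is the required decomposition.
\end{itemize}
The natural candidate is to choose $k$ by moving weight from the deficient side $N_1$ to the positive side $N_2$. Equivalently, one can look directly for $\ell_2\le\ell$ with $N_2(\ell_2)\ge 0$ and $N_1(\ell-\ell_2)\ge 0$.

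The tools are the elementary ceiling inequalities $\lceil x\rceil+\lceil y\rceil-1\le\lceil x+y\rceil\le\lceil x\rceil+\lceil y\rceil$. Applied leg by leg, they give superadditivity $N_j(a+b)\ge N_j(a)+N_j(b)$, together with a reverse bound of the form $N_j(a+b)\le N_j(a)+N_j(b)+m_j$, where $m_j$ is the number of legs. Superadditivity alone points the wrong way for deducing $N_1(\ell-\ell_2)\ge 0$ from information at $\ell$. So the bookkeeping must use the finer structure of the functions $\ell\mapsto\lceil \ell\omega_i/\alpha_i\rceil$. Concretely, I would choose $\ell_2$ as the largest element of $\Se_{\Gamma_2}$ in $[0,\ell]$ for which the fractional parts $\{\ell_2\omega_i/\alpha_i\}$ do not force extra carries in $N_1$.

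I expect this combinatorial step to be the main obstacle, because the two quasi-linear functions interact non-trivially. If the purely combinatorial route stalls, I would switch to the geometric interpretation \eqref{eq:DPD}. Nonnegativity $N_j(\ell)\ge 0$ means $D_j^{(\ell)}$ is effective on $E_0=\mathbb{P}^1$, and the divisor for the sum graph is $D_1^{(\ell)}+D_2^{(\ell)}$, with all points $P_i$ placed on one $\mathbb{P}^1$. One would then try to show that every nonzero homogeneous element of degree $\ell$ in the ring of the sum factors as a product of homogeneous pieces coming from the two rings, which would give $\ell\in\Se_{\Gamma_1}+\Se_{\Gamma_2}$.
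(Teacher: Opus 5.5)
Your proof of the first inclusion is correct. For the second inclusion, your own first bullet already completes the argument. The rest of the proposal is aimed at a case that does not exist.

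From $N_1(\ell)+N_2(\ell)\ge 0$ it follows that $\max\{N_1(\ell),N_2(\ell)\}\ge 0$. So the genuine ``otherwise'' case, with both values negative, is impossible. The case $N_1(\ell)<0<N_2(\ell)$ that you single out is covered by your symmetric bullet: there $\ell\in\Se_{\Gamma_2}$ and $\ell=0+\ell$. In your induction language, $k=\ell$ works, since $0\in\Se_{\Gamma_1+\Gamma_2}$. Thus
$\Se_{\Gamma_1+\Gamma_2}\subset\Se_{\Gamma_1}\cup\Se_{\Gamma_2}\subset\Se_{\Gamma_1}+\Se_{\Gamma_2}$,
where the last step holds because $0$ lies in both semigroups. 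The paper only quotes the lemma from earlier work without proof, and this two-line argument is all that is needed.

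As submitted, however, the proposal does not establish the second inclusion. It ends by declaring an unresolved ``main obstacle'', and neither tool you propose would have resolved it. The selection rule for $\ell_2$ via fractional parts is never made precise.

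The geometric fallback rests on a multiplication that is not available. Take a section of $\mathcal{O}(D_1^{(\ell_1)})$ and a section of $\mathcal{O}(D_2^{(\ell_2)})$. Their product is only guaranteed to vanish to order $\lceil \ell_1\omega_i/\alpha_i\rceil$ at $P_i$ (and similarly at the points of the second graph). The divisor $D^{(\ell_1+\ell_2)}$ of the sum graph demands order $\lceil(\ell_1+\ell_2)\omega_i/\alpha_i\rceil$ there. So the product need not lie in the graded ring of $\Gamma_1+\Gamma_2$. Sections multiply into that ring only in equal degrees, and that is exactly the first inclusion.
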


The concepts presented in this section were defined in \cite{BL_flat} and will be important for us in this article.

\section{Proportionally modular and system of proportionally modular numerical semigroups}\label{s:Ssf}

\subsection{Proportionally modular numerical semigroups (PM) and their geometry}\label{s:PM}

Proportionally modular numerical semigroups were introduced by Rosales,
Garc\'ia-S\'anchez, Garc\'ia-Garc\'ia and Urbano-Blanco in
\cite{Garcia2002}.  

In the sequel we recall only those properties and facts that will be used in our 
construction of representations for SPM numerical semigroups. Further details on this class, regarding their equivalent Diophantine descriptions, 
decomposition results, or about the geometry, can be found in
\cite{Garcia2002,RoblesPerezRosales2008,RosalesUrbano2006, BLN_PM} and in the references therein.

\begin{definition}\label{def:pm}
Let $c<a<b$ be positive integers.  We set 
$
S(a,b,c):=\{x\in \Z_{\geq 0}\ :\ ax \bmod b\leq cx\}$.
If a numerical semigroup $\cP$ satisfies $\cP=S(a,b,c)$ for some $a,b,c$, then $\cP$ is called
a \emph{proportionally modular numerical semigroup} (PM semigroup for short).  In this case 
$S(a,b,c)$ is a \emph{Diophantine representation} of $\cP$.
The integers $a,b,c$ are called the factor, modulus and proportion,
respectively.  In the special case $c=1$, the semigroup $S(a,b,1)$ is called a
modular numerical semigroup.
\end{definition}

Consider the quotient semigroups
$
\langle \alpha_1,\alpha_2\rangle/n
=\{\ell\in\N\ :\ n\ell\in \langle \alpha_1,\alpha_2\rangle\}
$,
where $\langle \alpha_1,\alpha_2\rangle$ denotes the numerical semigroup
generated by $\alpha_1$ and $\alpha_2$, $\gcd(\al_1,\al_2)=1$.  A fundamental result of
Rosales and Urbano-Blanco \cite{RosalesUrbano2006} says that every PM numerical
semigroup $\cP$ is of this form:
\[
\cP=\frac{\langle \alpha_1,\alpha_2\rangle}{n}
\quad \mbox{for some } \ \al_1,\al_2 \ \ \mbox{and} \ \ n \ \ \mbox{with} \ \gcd(\alpha_1,\alpha_2)=1,
\]
and, conversely, every such quotient is proportionally modular.  

The minimal generators of a PM semigroup have a characteristic `bamboo' behaviour, c.f.  \cite{Garcia2002,BLN_PM}.  More
precisely, the minimal generators can be ordered, say as
$g_1,\ldots,g_e$, so that consecutive generators are relatively prime and satisfy
\[
g_{i-1}+g_{i+1}\equiv 0 \pmod {g_i},
\qquad 2\leq i\leq e-1,
\]

This bamboo structure can be extended to non-minimal sets of generators as well, and using toric geometry one can construct associated with every such bamboo set of generators a two-legged canonical equivariant resolution graph, which represents the given proportionally modular numerical semigroup; see \cite{BLN_PM}.

Finally, we will prove the following observation.
\begin{lemma}\label{lem:N=-1}
Let $\Gamma$ be a two-legged representation of a  proportionally modular numerical semigroup $\cP$.
If $\ell\in\N$ such that $N_\Gamma(\ell)<0$, then $N_\Gamma(\ell)=-1$.
\end{lemma}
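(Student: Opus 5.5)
The plan is a direct estimate on the quasi-linear function (\ref{defN}), using only that the representation has exactly two legs. Write the Seifert invariants of $\Gamma$ as $Sf=(-b_0;(\alpha_1,\omega_1),(\alpha_2,\omega_2))$, so that
\[
N_\Gamma(\ell)=b_0\ell-\Big\lceil \frac{\ell\omega_1}{\alpha_1}\Big\rceil-\Big\lceil \frac{\ell\omega_2}{\alpha_2}\Big\rceil .
\]
The aim is the lower bound $N_\Gamma(\ell)\geq -1$ for every $\ell\in\N$. Combined with the hypothesis $N_\Gamma(\ell)<0$ and the fact that $N_\Gamma(\ell)$ is an integer, this forces $N_\Gamma(\ell)=-1$.

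First I use the elementary inequality $\lceil x\rceil<x+1$, valid for every real $x$. Applying it to both ceiling terms gives
\[
N_\Gamma(\ell)> b_0\ell-\frac{\ell\omega_1}{\alpha_1}-\frac{\ell\omega_2}{\alpha_2}-2=|e|\,\ell-2,
\]
where $e=-b_0+\omega_1/\alpha_1+\omega_2/\alpha_2$ is the orbifold Euler number. Negative definiteness gives $e<0$, so $|e|\ell\geq 0$ for $\ell\geq 0$. Hence $N_\Gamma(\ell)>-2$, and since $N_\Gamma(\ell)$ is an integer, $N_\Gamma(\ell)\geq -1$. This concludes the argument.

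There is no real obstacle here. The only point to keep explicit is that the number of legs enters the bound: with $d$ legs the same estimate gives $N_\Gamma(\ell)>-d$. This is why the lemma is stated for two-legged representations, which exist for every PM semigroup by \cite{BLN_PM}. I would remark after the proof that this single-step deficiency is what will later let suitable multiplicities of the summand graphs in the sum construction of \S\ref{ss:sumgraph} compensate negative values.
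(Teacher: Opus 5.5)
Your proof is correct. It reaches the same bound as the paper by a more elementary, self-contained route.

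The paper argues through the invariant $\gamma$ of (\ref{eq:gamma}). For $d=2$ one has $\gamma|e|=-1/\alpha_1-1/\alpha_2<0$. It then cites \cite[Prop. 3.2.11(b)]{LN_sf}, which gives $N(\ell)\geq -1$ for every $\ell>\gamma$; since $\gamma<0$, this covers all $\ell\in\N$.

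That cited bound follows from a sharper form of your estimate. Because $\ell\omega_i/\alpha_i$ has denominator $\alpha_i$, one has $\lceil \ell\omega_i/\alpha_i\rceil\leq \ell\omega_i/\alpha_i+1-1/\alpha_i$. This yields $N(\ell)\geq |e|(\ell-\gamma)-2$ for any number of legs.

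Your bound $\lceil x\rceil<x+1$ discards the $1/\alpha_i$ terms. For two legs this costs nothing, since the resulting threshold $\ell\geq 0$ already covers all of $\N$. In exchange, your proof becomes a two-line computation that needs only $e<0$ and $d=2$, with no external citation.

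What the paper's formulation buys is generality and context. It states the $-1$ lower bound for arbitrary $d$ in the range $\ell>\gamma$, and ties the lemma to the standard invariant $\gamma$. This is the right language for the later remark on why the method breaks down for graphs with more than two legs.

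Neither argument uses that $\Gamma$ represents a PM semigroup; only the two-legged shape and negative definiteness matter.
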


\begin{proof}
In section \ref{ss:numinv} we have discussed already that if a $\Gamma$ has $d=2$ legs, then $\gamma<0$. On the other hand, \cite[Prop. 3.2.11(b)]{LN_sf} one deduces that $N(\ell)\geq -1$ whenever $\ell>\gamma$. Thus $N_\Gamma(\ell)\geq -1$ for all $\ell\in\N$ in this case.  As $N_\Gamma$ is integer-valued, the condition $N_\Gamma(\ell)<0$ forces $N_\Gamma(\ell)=-1$.
\end{proof}

\subsection{System of proportionally modular numerical semigroups (SPM)}

\subsubsection{}
The concept of the system of proportionally modular numerical semigroups appeared at the same time as PM numerical semigroups in \cite{Garcia2002}. 

\begin{definition}
Let $a_i, b_i, c_i$, $i\in\{1,\dots,r\}$ be positive integers and consider the system of inequalities
\[
\begin{cases}
a_1 x \bmod b_1 \leq c_1 x, \\
\vdots \\
a_r x \bmod b_r \leq c_r x.
\end{cases}
\]

Then the set of all non-negative integer solutions $\cS$ is called a \emph{system of proportionally modular numerical semigroups}. We say further that $\cS$ is represented by a \emph{system of Diophantine representations} $\{S(a_i,b_i,c_i)\}_i$.

Let $\cP_i:=S(a_i,b_i,c_i)$ be the PM numerical given by the corresponding Diophantine representation. Then $\cS=\cP_1\cap\dots\cap\cP_r$ provides a \emph{PM decomposition} of the numerical semigroup $\cS$. 

For simplicity, we will refer to the system of proportionally modular numerical semigroups as SPM numerical semigroups.
\end{definition}

In  \cite{Garcia2002}, the authors provide also an algorithm which decides whether a numerical semigroup is an SPM or not. As a consequence of the quotient structure of the PM semigroups, every SPM can be written as 
$$\cS=\langle \al_{11},\al_{12}\rangle/n_1\cap\dots\cap\langle \al_{r1},\al_{r2}\rangle/n_r.$$
This property was further refined in the context of Toms decomposition in \cite{GarciaSanchezRosales2008}. In the sequel, we present this concept in more detail.

\subsubsection{\bf Toms decomposition of an SPM}[\cite{Toms2003,GarciaSanchezRosales2008}]\label{ss:Toms}

Let $\cS$ be a numerical semigroup. We say that $\cS$ has a \emph{Toms decomposition} if there exist positive integers $\al_{i1}, \al_{i2}$ ($i\in\{1,\dots,r\}$) and a unique $n\in \Z_{>0}$ such that
\begin{itemize}
\item[(1)] $\gcd(\al_{i1},\al_{i2})=\gcd(n,\al_{ij})=1$ for any $i \in \{1,\dots,r\}$, $j\in\{1,2\}$ and
\item[(2)] $\cS = \frac{1}{n}\bigcap_{i=1}^r \langle \al_{i1}, \al_{i2} \rangle.$
\end{itemize}

First of all, note that one has $\frac{1}{n}\bigcap_{i=1}^r \langle \al_{i1}, \al_{i2} \rangle=\bigcap_{i=1}^r\langle \al_{i1}, \al_{i2} \rangle/n$. Hence, by the assumption (1), every Toms block $\cP_i:=\langle \al_{i1}, \al_{i2} \rangle/n$  is a PM semigroup. In particular,  every numerical semigroup having a Toms decomposition is an SPM. Furthermore, by Rosales and Garc\'ia-S\'anchez one knows that the converse is also true. 

\begin{thm}[\cite{GarciaSanchezRosales2008}]
Every SPM numerical semigroup admits a Toms decomposition.
\end{thm}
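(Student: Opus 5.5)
The statement is quoted from \cite{GarciaSanchezRosales2008}, and in the paper it is used as a black box. If I were to prove it, I would work with the interval description of proportionally modular semigroups instead of the Diophantine one. The starting point is the PM decomposition already noted above,
$$\cS=\langle \al_{11},\al_{12}\rangle/n_1\cap\dots\cap\langle \al_{r1},\al_{r2}\rangle/n_r ,$$
in which the denominators $n_i$ may differ and need not be coprime to the generators. The task is therefore to re-encode every block with one common denominator $n$ that is coprime to all generators.

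\textbf{Step 1 (interval form of a block).} A natural number $\ell$ lies in $\langle \al_1,\al_2\rangle/n$ exactly when $n\ell=u\al_1+v\al_2$ with $u,v\ge 0$. Writing $u+v=k$, this says that $\ell$ is the numerator of a sum of $k$ elements of the closed rational interval $[\al_1/n,\al_2/n]$ whose value is $\ell/k$. So $\langle \al_1,\al_2\rangle/n$ should coincide with
$$S(I):=\{\ell\in\N:\ \exists\, k\ge 1 \text{ with } \ell/k\in I\}\cup\{0\},\qquad I=[\al_1/n,\al_2/n].$$
This rests on the standard fact that the submonoid of $\Q_{\ge 0}$ generated by a closed interval $I$ is $\bigcup_{k\ge1}kI$. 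Conversely, every closed interval $[p_1/q_1,p_2/q_2]$ with positive rational endpoints can be rewritten as $[\al_1/n,\al_2/n]$ for a suitable common denominator $n$. The cost is that $\gcd(\al_1,\al_2)$ and $\gcd(n,\al_j)$ may fail to be $1$, and that is exactly the defect to repair.

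\textbf{Step 2 (stability under perturbation).} Let $F$ be the Frobenius number of $\cS$. Membership of an integer $\ell>F$ is automatic in every block, so only the finitely many integers $\ell\le F$ matter. For these, only the finitely many fractions $\ell/k$ with $1\le\ell\le F$ and $\ell/k$ in a bounded region are relevant. Hence every endpoint of every interval $I_i$ can be moved by a sufficiently small amount $\varepsilon$ without changing $S(I_i)\cap[0,F]$.

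One must make sure this does not create new elements: an endpoint that is itself one of the critical fractions must either stay fixed or be moved outward only within a gap free of other critical fractions. One must also make sure it does not destroy old ones, so the intervals are enlarged rather than shrunk. With these precautions each $S(I_i)\cap[0,F]$ is unchanged. Moreover, the enlarged intervals contain the old ones, so their semigroups still contain every integer above $F$. Consequently the intersection is still $\cS$.

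\textbf{Step 3 (common denominator).} Choose a prime $n$ larger than every number occurring so far and with $1/n<\varepsilon$. Replace each enlarged interval by $[\al_{i1}/n,\al_{i2}/n]$, where $\al_{i1}=\lfloor n\cdot(\text{left endpoint})\rfloor$ and $\al_{i2}=\lceil n\cdot(\text{right endpoint})\rceil$, adjusting each of them by at most a few units if necessary.

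These adjustments are made for two purposes. First, $n\nmid \al_{ij}$, which is automatic for large $n$ since $0<\al_{ij}<n\cdot\max(\text{endpoints})$ and we may avoid multiples of $n$. Second, $\gcd(\al_{i1},\al_{i2})=1$, which is possible because among a few consecutive integers one can find a coprime pair. The new endpoints still lie within $\varepsilon$ of the perturbed ones, so Step 2 guarantees
$$\cS=\bigcap_i S\big([\al_{i1}/n,\al_{i2}/n]\big)=\frac1n\bigcap_i\langle\al_{i1},\al_{i2}\rangle,$$
which is a Toms decomposition.

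\textbf{Main obstacle.} I expect the main obstacle to be the bookkeeping in Step 2: the perturbation must be one-sided, namely outward, and must not cross any critical fraction $\ell/k$ with $\ell\le F$ that lies outside the interval. Endpoints that are themselves critical need care. There I would first try replacing the interval by an equivalent one whose endpoints are not critical, using the fact that the interval is closed and that critical fractions form a finite set, so each endpoint has a gap to the next critical fraction on its outer side.
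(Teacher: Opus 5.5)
The paper gives no proof of this statement; it only cites Garc\'ia-S\'anchez--Rosales. Your argument has a genuine gap in Step~1, and that error also invalidates Step~3. The identity $\langle \al_1,\al_2\rangle/n=S([\al_1/n,\al_2/n])$ is false; only the inclusion $\subseteq$ holds. Writing $n\ell=u\al_1+v\al_2$ expresses $\ell$ as a sum of $k=u+v$ copies of the two \emph{endpoints} of $I$. By contrast, $S(I)$ allows sums of arbitrary points of $I$, and the monoid generated by $\{\al_1/n,\al_2/n\}$ is much smaller than $\bigcup_k kI$. Already for $n=1$ one has $3\in S([2,5])$ (take $k=1$), but $3\notin\langle 2,5\rangle$; in fact $S([2,5])=\langle 2,3\rangle$.

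The same problem defeats Step~3. Take $\cS=\langle 2,5\rangle=S([2,5/2])$, so $F=3$. The fractions $3/2$ and $3$ are critical, so Step~2 only lets you enlarge $[2,5/2]$ inside $(3/2,3)$. Any output of Step~3 therefore has $\al_1>3n/2$, $\al_1\neq 2n$ (because $n\nmid\al_1$) and $\al_2\ge 5n/2$. Then $2n\notin\langle\al_1,\al_2\rangle$, so $2\notin\frac1n\langle\al_1,\al_2\rangle$, although $2\in\cS$. More generally, suppose $\al_j\approx nx_j$ with $x_2>1$, and pick $1\le u\le n$ with $n\mid\al_1\al_2-u\al_1-\al_2$. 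This produces a gap of $\langle\al_1,\al_2\rangle/n$ of size roughly $nx_1(x_2-1)$. So enlarging $n$ pushes the Frobenius number to infinity rather than preserving $\cS$.

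The correct dictionary between quotients and intervals uses the \emph{determinant} of the endpoints, not their common denominator. Suppose $\al_1/b_1<\al_2/b_2$ with $b_j>0$, set $d:=\al_2b_1-\al_1b_2$, and assume $\gcd(\al_1,d)=1$. Then $S([\al_1/b_1,\al_2/b_2])=\langle\al_1,\al_2\rangle/d$. To see this, write $(\ell,k)$ as a nonnegative combination of $(\al_1,b_1)$ and $(\al_2,b_2)$; by Cramer's rule the coefficients lie in $\frac1d\Z$, and the coprimality handles the converse. With $b_1=b_2=n$ the determinant is $n(\al_2-\al_1)$, which is exactly why Step~1 fails.

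A Toms decomposition therefore needs a single integer $n$ and, for each block $i$, lattice points $(\al_{i1},b_{i1})$ and $(\al_{i2},b_{i2})$ with the following properties:
\begin{enumerate}
\item the fractions $\al_{i1}/b_{i1}$ and $\al_{i2}/b_{i2}$ lie in the safe windows from Step~2;
\item $\al_{i2}b_{i1}-\al_{i1}b_{i2}=n$ for every $i$;
\item $\gcd(\al_{i1},\al_{i2})=\gcd(n,\al_{ij})=1$.
\end{enumerate}
Your perturbation in Step~2 (finitely many critical fractions, outward enlargement) is a sound ingredient for this. The real difficulty, however, is producing a \emph{common determinant} $n$ that also satisfies these coprimality conditions, and your Step~3 does not address it.
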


\begin{example}
\label{ex:toms}
As a short example, let us consider the semigroup $\cS = \langle11, 14, 15, 18, 19, 21\rangle$. 
Using the algorithm of \cite{Garcia2002}, one can check that $\cS$ is an SPM and  list its decompositions. For example, we can find the following decompositions consisting of two blocks:  
\begin{align*}
\cS
= \langle 5,8,11,14, 17 \rangle
   \cap \langle 7, 11, 15, 19 \rangle
&= \langle 3, 11, 19 \rangle \cap \langle 7, 11, 15, 19 \rangle \\
&= \langle 5, 8, 11, 14 \rangle \cap \langle 7, 11, 15, 19 \rangle.
\end{align*}
Using quotients of $2$ generators semigroups, these identities can be read as
\begin{equation*}
\cS= \frac{\langle 5, 17 \rangle}{4}
   \cap \frac{\langle 7, 19 \rangle}{3}
= \frac{\langle 3, 19 \rangle}{2} \cap \frac{\langle 7, 19 \rangle}{3} = \frac{\langle 5, 14 \rangle}{3} \cap \frac{\langle 7, 19 \rangle}{3}.
\end{equation*}
As we can see, only the last decomposition is a Toms decomposition, whereas in the first two cases  the denominators do not coincide.
\end{example}

\begin{remark}
We note that the Toms decomposition of an SPM numerical semigroup is not unique, consider eg. the simple example $\langle 4,6,7,9\rangle=\langle 2,7\rangle\cap\langle 3,4\rangle=\langle 2,5\rangle\cap \langle 3,4\rangle$.
\end{remark}

\section{Representations for SPM numerical semigroups}\label{s:main}

In this section we prove Theorem \ref{thm:main} by constructing a concrete representation of an SPM numerical semigroup, based on its Toms decomposition.

\subsection{The quotient representation of PM semigroups}\label{s:PMquotrep} First of all let $\cP$ be a PM numerical semigroup. We fix a quotient structure $\cP=\langle \al_1, \al_2\rangle/n$ as in section \ref{s:PM}.  

Since $\cP$ is representable, in the sequel we will choose one of its representations, $\Gamma$, which encodes the properties of its fixed quotient structure and it will be convenient for our purpose. 

Indeed, by \cite[Example 3.2]{BL_flat} we know that  $\langle  \al_1, \al_2\rangle$ is representable by a graph $\Gamma_{\langle  \al_1, \al_2\rangle}$ with two legs, defined by the Seifert invariants:
\[
Sf_{\Gamma_{\langle  \al_1, \al_2\rangle}}=(-1, (\al_1, \beta_{1}), (\al_2, \beta_{2})),
\]
where the equation $\al_1\al_2 - \beta_{1} \al_2 -\beta_{2}\al_1=1$ together with $0\leq \beta_j<\alpha_j$ determines uniquely the integers $\beta_j$, $j=1,2$. Then, for the fixed $n\geq 1$ one can construct from $\Gamma_{\langle  \al_1, \al_2\rangle}$ a representation $\Gamma$ of $\cP$ as follows.

The quasi-linear function associated with $\Gamma_{\langle  \al_1, \al_2\rangle}$ is $N(\ell)=\ell-\lceil \beta_{1}\cdot \ell/\al_1\rceil-\lceil \beta_{2}\cdot \ell/\al_2\rceil$. That is $\langle \al_1, \al_2\rangle=\{\ell\in \Z \ : \ N(\ell)\geq 0\}$. Then the quasi-linear function $N(o\ell)$ defines the quotient semigroup $\cP$ and provides the Seifert invariants for one of its representation $\Gamma$, see \cite{BL_flat}[Remark 3.3.2]. 

An advantage of this representation is that the determinant of $\Gamma$ is exactly the denominator $n$, as we will show in the next lemma.

\begin{lemma}
\label{lem:1}
Let $\cP$ be a PM numerical semigroup written as a quotient  $\langle \al_1, \al_2\rangle/n$ with $\al_1,\al_2,n$ pairwise relatively prime positive integers, and consider the representation $\Gamma$ of $\cP$ as above.  Then $\det(\Gamma)=n$.
\end{lemma}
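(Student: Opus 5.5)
The plan is to compute the Seifert invariants of $\Gamma$ explicitly from the quasi-linear function $N(n\ell)$ and then read off $\det(\Gamma)$ from formula (\ref{eq:sei2}). Here $\det(\Gamma)$ is understood as $|H|=|\det|$ of the intersection form. The key identity is the defining relation $\al_1\al_2-\beta_1\al_2-\beta_2\al_1=1$. It says that the orbifold Euler number of $\Gamma_{\langle \al_1,\al_2\rangle}$ is $e=-1+\beta_1/\al_1+\beta_2/\al_2=-1/(\al_1\al_2)$. Passing to the quotient should simply multiply this by $n$.

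\emph{Step 1: normalize the quotient function.} Start from
\[
N(n\ell)=n\ell-\lceil n\beta_1\ell/\al_1\rceil-\lceil n\beta_2\ell/\al_2\rceil .
\]
Divide with remainder, $n\beta_j=q_j\al_j+r_j$ with $0\le r_j<\al_j$. Since $q_j\ell$ is an integer, it comes out of the ceiling, so
\[
N(n\ell)=(n-q_1-q_2)\ell-\lceil r_1\ell/\al_1\rceil-\lceil r_2\ell/\al_2\rceil .
\]
Suppose $\al_j>1$. Then $\gcd(\beta_j,\al_j)=1$, because the defining relation taken mod $\al_j$ gives $-\beta_j\al_{3-j}\equiv 1$. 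We also have $\gcd(n,\al_j)=1$ by hypothesis. Hence $r_j\neq 0$ and $\gcd(r_j,\al_j)=1$. Thus $(\al_j,r_j)$ is a legitimate normalized pair with $0<r_j<\al_j$. The Seifert invariants of $\Gamma$ are therefore
\[
Sf_\Gamma=\bigl(-(n-q_1-q_2);(\al_1,r_1),(\al_2,r_2)\bigr).
\]
If some $\al_j=1$, then $\beta_j=0$, the corresponding term vanishes, and that leg is simply absent. The same computation goes through with fewer legs.

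\emph{Step 2: compute the Euler number and the determinant.} We get
\[
e_\Gamma=-(n-q_1-q_2)+\frac{r_1}{\al_1}+\frac{r_2}{\al_2}=-n+\frac{n\beta_1}{\al_1}+\frac{n\beta_2}{\al_2}=-\frac{n(\al_1\al_2-\beta_1\al_2-\beta_2\al_1)}{\al_1\al_2}=-\frac{n}{\al_1\al_2}.
\]
This is negative, so $\Gamma$ is indeed negative definite. By (\ref{eq:sei2}), $|H|=\al_1\al_2|e_\Gamma|=n$, which is the claim.

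The only real obstacle is Step 1: we must check that the reduction produces honest normalized Seifert invariants, i.e. $r_j\ne0$ and $\gcd(r_j,\al_j)=1$. Otherwise (\ref{eq:sei2}) could not be applied to a genuine two-legged graph. This is exactly where the pairwise coprimality of $\al_1,\al_2,n$ is used, together with the coprimality of $\beta_j$ and $\al_j$ coming from the defining relation. The degenerate case $\al_j=1$ also needs a separate word.
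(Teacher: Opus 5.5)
Your proof is correct and follows the paper's argument. You rewrite $N(n\ell)$ by reducing $n\beta_j$ modulo $\al_j$ to read off the Seifert invariants of $\Gamma$, obtain $e_\Gamma=n\,e_{\Gamma_{\langle\al_1,\al_2\rangle}}=-n/(\al_1\al_2)$, and conclude via $\det(\Gamma)=\al_1\al_2|e_\Gamma|=n$. The differences are minor: you get $|e_{\Gamma_{\langle\al_1,\al_2\rangle}}|=1/(\al_1\al_2)$ directly from the relation $\al_1\al_2-\beta_1\al_2-\beta_2\al_1=1$, whereas the paper uses that $\Gamma_{\langle\al_1,\al_2\rangle}$ resolves a smooth germ. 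You also verify explicitly, via $\gcd(n,\al_j)=\gcd(\beta_j,\al_j)=1$, that the reduced pairs are normalized, a point the paper leaves implicit.
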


\begin{proof}

Recall from \cite{BL_flat} that we can write 
\begin{equation}\label{eq:Nol}
N(n\ell) = n\ell - \Big\lceil \frac{n\ell\cdot\beta_{1}}{\al_1} \Big\rceil - \Big\lceil \frac{n\ell\cdot \beta_{2}}{\al_2} \Big\rceil=\Big(n\cdot|e_{\Gamma_{\langle  \al_1, \al_2\rangle}}|+\frac{\omega_{1}}{\al_1}+\frac{\omega_{2}}{\al_2}\Big)\ell-\Big\lceil \frac{\ell \omega_{1}}{\al_1} \Big\rceil-\Big\lceil \frac{\ell \omega_{2}}{\al_2} \Big\rceil,
\end{equation}
where $0\leq \omega_{j}<\al_j$ satisfy $n\beta_{j} \equiv \omega_{j} \pmod{\al_j}$ for every $j=1,2$. Since $n|e_{\Gamma_{\langle  \al_1, \al_2\rangle}}|+\sum_{j=1,2}\omega_{j}/\al_j\in \Z_{>0}$, the expression (\ref{eq:Nol}) gives us the Seifert invariants 
\begin{equation}\label{eq:SfPM}
Sf_{\Gamma}=(-(n\cdot|e_{\Gamma_{\langle  \al_1, \al_2\rangle}}|+\sum_{j=1,2}\omega_{j}/\al_j);(\al_1,\omega_{1}),(\al_2,\omega_{2}))
\end{equation}
of $\Gamma$. In particular, $e_{\Gamma}=ne_{\Gamma_{\langle  \al_1, \al_2\rangle}}$ and 
$$\det(\Gamma)=\al_1\al_2|e_{\Gamma}|=\al_1\al_2n|e_{\Gamma_{\langle  \al_1, \al_2\rangle}}|=n,$$
since $|e_{\Gamma_{\langle  \al_1, \al_2\rangle}}|=1/(\al_1\al_2)$ is implied by the fact that $\Gamma_{\langle  \al_1, \al_2\rangle}$ is a canonical equivariant resolution graph of the smooth germ $(\mathbb{C}^2,0)$, hence $\det(\Gamma_{\langle  \al_1, \al_2\rangle})=1$.

\end{proof}

\begin{remark}\label{rem:trivial}
If we have $n\cdot|e_{\Gamma_{\langle  \al_1, \al_2\rangle}}|+\sum_{j=1,2}\omega_{j}/\al_j>1$ for the decoration of the central vertex in $\Gamma$, then $\cP=\cS_{\Gamma}=\N$. 
\end{remark}

\subsection{The sum of quotient representations with multiplicity}\label{s:gamma}
Let $\cS$ be an SPM numerical semigroup and we choose a  decomposition $\cS=\cP_1\cap\dots\cap\cP_r$ such that the PM semigroups are written as quotients $\cP_i=\langle \al_{i1},\al_{i2}\rangle/n_i$ for any $i\in\{1,\dots,r\}$, $\al_{i1},\al_{i2},n_i$ satisfying the properties of section \ref{s:PM}.

Associated with every $\cP_i$, $i=1,\dots,r$, we consider the representation $\Gamma_i$ constructed in section \ref{s:PMquotrep} and its quasi-linear function $N_{\Gamma_i}(\ell)$. We may assume that the Seifert invariants of $\Gamma_i$ are $Sf_{\Gamma_i}=(-1;(\alpha_{ij}, \omega_{ij})_{j=1,2})$, otherwise by Remark \ref{rem:trivial} $\cP_i$ is trivial and can be omitted from the decomposition. Let us denote by $e_i$ the orbifold euler number associated with $\Gamma_i$ and assume that  
$$|e_1|\leq |e_2| \leq \dots \leq |e_r|.$$
Now, for any $i,j\in\{1,\dots,r\}$ with $i<j$ define the positive integer
$$k_{ij} := \Big\lceil\frac{e_j}{e_i} \Big\rceil-1.$$ 
Furthermore, we associate recursively with any $\Gamma_i$ the following multiplicities: 
\begin{equation}\label{eq:mult}
q_r = 1 \quad \mbox{and} \quad q_i = \sum_{i<j\leq r} k_{ij} q_j + 1 \quad \text{for all } \ i \in \{1, \dots, r-1\}.
\end{equation}

Now, using section \ref{ss:sumgraph}, we can define a new graph by
\begin{equation}\label{SPMrep}
\Gamma := \sum_{i=1}^{r} q_i \Gamma_i.
\end{equation}
We will now prove that $\Gamma$ is a representation of the SPM numerical semigroup $\cS$.

\begin{thm}\label{thm:2}
$\cS_\Gamma=\cS$, that is the semigroup associated with $\Gamma$ is exactly the SPM semigroup $\cS$.
\end{thm}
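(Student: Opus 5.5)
The proof compares the two semigroups through the quasi-linear function of $\Gamma$. By the construction of section \ref{ss:sumgraph}, iterated and with multiplicities, we have
$$N_\Gamma(\ell)=\sum_{i=1}^r q_i N_{\Gamma_i}(\ell).$$
Each $\Gamma_i$ is a genuine canonical equivariant resolution graph and all $q_i\geq 1$, so $e_\Gamma=\sum_i q_ie_i<0$ and $\Gamma$ is well defined. I would prove the two inclusions separately.

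For $\cS\subset\cS_\Gamma$, take $\ell\in\cS=\bigcap_i\cP_i$. Since $\cP_i=\cS_{\Gamma_i}$, we get $N_{\Gamma_i}(\ell)\geq 0$ for every $i$. The $q_i$ are positive, so $N_\Gamma(\ell)\geq 0$ and $\ell\in\cS_\Gamma$. This is the first inclusion of Lemma \ref{thm:sumbound}, iterated.

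For $\cS_\Gamma\subset\cS$, take $\ell\in\N\setminus\cS$. Then $\ell\notin\cP_i$ for some $i$, and by Lemma \ref{lem:N=-1} we have $N_{\Gamma_i}(\ell)=-1$. The key step is to turn this into an upper bound on $|e_i|\ell$. Using the Seifert invariants $(-1;(\al_{ij},\w_{ij})_{j=1,2})$ of $\Gamma_i$, write
$$N_{\Gamma_i}(\ell)=|e_i|\ell-\sum_{j=1,2}\Big(\Big\lceil\frac{\ell\w_{ij}}{\al_{ij}}\Big\rceil-\frac{\ell\w_{ij}}{\al_{ij}}\Big).$$
Each bracket lies in $[0,1)$, so the correction term is strictly less than $2$. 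Hence $N_{\Gamma_i}(\ell)=-1$ forces $|e_i|\ell<1$.

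Next I use $N_{\Gamma_j}(\ell)\leq |e_j|\ell$ (section \ref{ss:reprsem}) to bound the other summands:
\begin{itemize}
\item For $j<i$, the ordering gives $|e_j|\leq|e_i|$, so $N_{\Gamma_j}(\ell)\leq |e_j|\ell<1$. As $N_{\Gamma_j}(\ell)$ is an integer, $N_{\Gamma_j}(\ell)\leq 0$.
\item For $j>i$, we get $N_{\Gamma_j}(\ell)<|e_j|/|e_i|=e_j/e_i$. Since $N_{\Gamma_j}(\ell)$ is an integer strictly below $e_j/e_i$, it satisfies $N_{\Gamma_j}(\ell)\leq\lceil e_j/e_i\rceil-1=k_{ij}$.
\end{itemize}
Combining these bounds with the recursion (\ref{eq:mult}) gives
$$N_\Gamma(\ell)\leq \sum_{j<i}q_j\cdot 0-q_i+\sum_{j>i}q_jk_{ij}=-1<0.$$
So $\ell\notin\cS_\Gamma$, and for $i=r$ the last sum is empty and the same conclusion holds.

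The main obstacle is the estimate $|e_i|\ell<1$ from $N_{\Gamma_i}(\ell)=-1$. It relies on having exactly two legs, so that the ceiling corrections sum to less than $2$; it is also why Lemma \ref{lem:N=-1} and the normalization $b_0=1$ from Remark \ref{rem:trivial} are needed. Once this is in hand, the multiplicities $q_i$ are tailored so that the final inequality closes exactly at $-1$.
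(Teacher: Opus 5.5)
Your proposal is correct and follows the paper's proof essentially step for step: Lemma \ref{lem:N=-1} gives $N_{\Gamma_i}(\ell)=-1$, the two-legged ceiling correction gives $\ell|e_i|<1$, and the bounds $N_{\Gamma_j}(\ell)\leq 0$ for $j<i$ and $N_{\Gamma_j}(\ell)\leq k_{ij}$ for $j>i$ combine with the recursion (\ref{eq:mult}) to give $N_\Gamma(\ell)\leq -1$. The differences are cosmetic: you derive $\ell|e_i|<1$ directly rather than by contradiction, and you argue contrapositively. Your closing remark overstates the role of $b_0=1$, since the identity $N_{\Gamma_i}(\ell)=|e_i|\ell-\sum_j(\lceil \ell\w_{ij}/\al_{ij}\rceil-\ell\w_{ij}/\al_{ij})$ holds for any central decoration.
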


\begin{proof}
We consider the quasi-linear function $N_\Gamma(\ell)$ associated with $\Gamma$, that is $\cS_{\Gamma}=\{\ell \in \N \ : \ N_\Gamma(\ell) \geq 0\}$. Since $\cS=\cap_{i=1}^r\ \cP_i$, the identification $\cS_\Gamma=\cS$ is equivalent with  
\begin{center}
$N_\Gamma(\ell) \geq 0$ if and only if $N_{\Gamma_i}(\ell) \geq 0$ for all $i = 1,\dots, r$ and $\ell \in \mathbb{N}$.
\end{center}
Note that $\cS\subset \cS_{\Gamma}$ is immediate since  we have $N_{\Gamma}=\sum_i q_iN_{\Gamma_i}$, see \ref{ss:sumgraph}. 

Let us now prove $\cS_{\Gamma}\subset \cS$. 

Assume that there exists an $\ell$ such that $N_\Gamma(\ell) \geq 0$ and $N_{\Gamma_j}(\ell) < 0$ for some $j\in\{1,\dots,r\}$, and we fix such an $\ell$ and index $j$. 

By Corollary \ref{lem:N=-1}, for this $\ell$ and $j$ we must have $N_{\Gamma_j}(\ell)=-1$. Furthermore, we will show that   
\begin{equation}\label{eq:ejceil}
    0 < \ell|e_j| < 1.
\end{equation}

Indeed, let us assume that  $\ell e_j\leq -1$. Then,  combining this inequality with $N_{\Gamma_j}(\ell)=b_j\ell-\sum_{i=1,2}\lceil\ell\omega_{ji}/\al_{ji}\rceil=-1$ would imply that  
\[
\sum_{i=1}^{2} \frac{\ell \,\omega_{ji}}{\alpha_{ji}} - \left\lceil \frac{\ell \,\omega_{ji}}{\alpha_{ji}} \right\rceil \leq -2,
\]
which is a contradiction. Hence, we have (\ref{eq:ejceil}).

Next, for any index $t\neq j$ we consider the following inequality:

\begin{align}\label{eq:Nt}
N_{\Gamma_t}(\ell)=\ell 
  - \left\lceil \frac{\omega_{t1}\,\ell}{\alpha_{t1}} \right\rceil
  - \left\lceil \frac{\omega_{t2}\,\ell}{\alpha_{t2}} \right\rceil \leq  \ell |e_t| .
\end{align}

Now, if $t < j$, the assumption $|e_t|\leq|e_j|$, (\ref{eq:ejceil}) and (\ref{eq:Nt}) imply  $N_{\Gamma_t} \leq 0$.

If we consider an index $t$ with $t > j$,  
then also with (\ref{eq:ejceil}) and (\ref{eq:Nt}) we deduce that  
\begin{equation}\label{eq:tgeqj}
N_{\Gamma_t}(\ell)\leq  \frac{|e_t|}{|e_j|}\ell |e_j| <  \frac{|e_t|}{|e_j|},
\end{equation}
hence we get $N_{\Gamma_t}(\ell)\leq\lceil|e_t|/|e_j|\rceil-1=k_{jt}$.

Combining these cases we can bound $N_{\Gamma}(\ell)$ as follows:
\begin{align*}
N_{\Gamma}(\ell)= \sum_{i=1}^{j-1} q_i N_{\Gamma_i}(\ell)
- q_j
+ \sum_{i=j+1}^{r} q_i N_{\Gamma_i}(\ell)\leq
- q_j
+ \sum_{i=j+1}^{r} q_i k_{ji} = -1.
\end{align*}
which contradicts our original assumption. 

Therefore we must have $N_{\Gamma_j} \geq 0$ for all $j = 1,\dots,r$, which concludes the proof.
\end{proof}

\begin{example}
We illustrate our construction by the following example. Consider the SPM semigroup $\cS=\langle 11,14,15,18,19,21\rangle$ and we look at its decomposition 
$\cS=\cP_1\cap \cP_2$, where $\cP_1=\langle   7, 11, 15, 19 \rangle=\langle 7,19\rangle/3$ and $\cP_2=\langle  5, 8, 11, 14, 17 \rangle=\langle 5,17\rangle/4$, see  Example \ref{ex:toms}. 

By \ref{s:PMquotrep} we can construct the quotient representations $\Gamma_i$ associated with $\cP_i$: $\Gamma_1$ is defined by the Seifert invariants $Sf_{\Gamma_1}=(-1;(7,5),(19,5))$, while $\Gamma_2$ is defined by $Sf_{\Gamma_2}=(-1;(5,3),(17,6))$. Then, one calculates $e_1=-3/133$, $e_2=-4/85$ and $k_{12}=\lceil e_2/e_1\rceil-1=2$. So the multiplicities of $\Gamma_i$ will be
$$q_1=k_{12}+1=3 \quad \mbox{and} \quad q_2=1$$ 
which, by (\ref{SPMrep}), provides the graph  $\Gamma=3\Gamma_1+\Gamma_2$ of $\cS$, shown by Figure \ref{fig:ex1}.

\begin{figure}[h!]
\centering

\tikzset{every picture/.style={line width=0.75pt}} 

\begin{tikzpicture}[x=0.75pt,y=0.75pt,yscale=-1,xscale=1]

\draw    (193,109) -- (251,189) ;
\draw [shift={(251,189)}, rotate = 54.06] [color={rgb, 255:red, 0; green, 0; blue, 0 }  ][fill={rgb, 255:red, 0; green, 0; blue, 0 }  ][line width=0.75]      (0, 0) circle [x radius= 3.35, y radius= 3.35]   ;
\draw [shift={(222,149)}, rotate = 54.06] [color={rgb, 255:red, 0; green, 0; blue, 0 }  ][fill={rgb, 255:red, 0; green, 0; blue, 0 }  ][line width=0.75]      (0, 0) circle [x radius= 3.35, y radius= 3.35]   ;
\draw [shift={(193,109)}, rotate = 54.06] [color={rgb, 255:red, 0; green, 0; blue, 0 }  ][fill={rgb, 255:red, 0; green, 0; blue, 0 }  ][line width=0.75]      (0, 0) circle [x radius= 3.35, y radius= 3.35]   ;
\draw    (251,189) -- (276,226) ;
\draw [shift={(276,226)}, rotate = 55.95] [color={rgb, 255:red, 0; green, 0; blue, 0 }  ][fill={rgb, 255:red, 0; green, 0; blue, 0 }  ][line width=0.75]      (0, 0) circle [x radius= 3.35, y radius= 3.35]   ;
\draw    (222,149) -- (314,150) ;
\draw [shift={(314,150)}, rotate = 0.62] [color={rgb, 255:red, 0; green, 0; blue, 0 }  ][fill={rgb, 255:red, 0; green, 0; blue, 0 }  ][line width=0.75]      (0, 0) circle [x radius= 3.35, y radius= 3.35]   ;
\draw [shift={(268,149.5)}, rotate = 0.62] [color={rgb, 255:red, 0; green, 0; blue, 0 }  ][fill={rgb, 255:red, 0; green, 0; blue, 0 }  ][line width=0.75]      (0, 0) circle [x radius= 3.35, y radius= 3.35]   ;
\draw    (222,149) -- (168,226) ;
\draw [shift={(168,226)}, rotate = 125.04] [color={rgb, 255:red, 0; green, 0; blue, 0 }  ][fill={rgb, 255:red, 0; green, 0; blue, 0 }  ][line width=0.75]      (0, 0) circle [x radius= 3.35, y radius= 3.35]   ;
\draw [shift={(195,187.5)}, rotate = 125.04] [color={rgb, 255:red, 0; green, 0; blue, 0 }  ][fill={rgb, 255:red, 0; green, 0; blue, 0 }  ][line width=0.75]      (0, 0) circle [x radius= 3.35, y radius= 3.35]   ;
\draw [shift={(222,149)}, rotate = 125.04] [color={rgb, 255:red, 0; green, 0; blue, 0 }  ][fill={rgb, 255:red, 0; green, 0; blue, 0 }  ][line width=0.75]      (0, 0) circle [x radius= 3.35, y radius= 3.35]   ;
\draw    (222,149) -- (251,104) ;
\draw    (251,104) -- (300,29) ;
\draw [shift={(300,29)}, rotate = 303.16] [color={rgb, 255:red, 0; green, 0; blue, 0 }  ][fill={rgb, 255:red, 0; green, 0; blue, 0 }  ][line width=0.75]      (0, 0) circle [x radius= 3.35, y radius= 3.35]   ;
\draw [shift={(275.5,66.5)}, rotate = 303.16] [color={rgb, 255:red, 0; green, 0; blue, 0 }  ][fill={rgb, 255:red, 0; green, 0; blue, 0 }  ][line width=0.75]      (0, 0) circle [x radius= 3.35, y radius= 3.35]   ;
\draw [shift={(251,104)}, rotate = 303.16] [color={rgb, 255:red, 0; green, 0; blue, 0 }  ][fill={rgb, 255:red, 0; green, 0; blue, 0 }  ][line width=0.75]      (0, 0) circle [x radius= 3.35, y radius= 3.35]   ;
\draw    (139,32) -- (193,109) ;
\draw [shift={(193,109)}, rotate = 54.96] [color={rgb, 255:red, 0; green, 0; blue, 0 }  ][fill={rgb, 255:red, 0; green, 0; blue, 0 }  ][line width=0.75]      (0, 0) circle [x radius= 3.35, y radius= 3.35]   ;
\draw [shift={(166,70.5)}, rotate = 54.96] [color={rgb, 255:red, 0; green, 0; blue, 0 }  ][fill={rgb, 255:red, 0; green, 0; blue, 0 }  ][line width=0.75]      (0, 0) circle [x radius= 3.35, y radius= 3.35]   ;
\draw [shift={(139,32)}, rotate = 54.96] [color={rgb, 255:red, 0; green, 0; blue, 0 }  ][fill={rgb, 255:red, 0; green, 0; blue, 0 }  ][line width=0.75]      (0, 0) circle [x radius= 3.35, y radius= 3.35]   ;
\draw    (179,149) -- (222,149) ;
\draw    (126,149) -- (179,149) ;
\draw [shift={(179,149)}, rotate = 0] [color={rgb, 255:red, 0; green, 0; blue, 0 }  ][fill={rgb, 255:red, 0; green, 0; blue, 0 }  ][line width=0.75]      (0, 0) circle [x radius= 3.35, y radius= 3.35]   ;
\draw [shift={(126,149)}, rotate = 0] [color={rgb, 255:red, 0; green, 0; blue, 0 }  ][fill={rgb, 255:red, 0; green, 0; blue, 0 }  ][line width=0.75]      (0, 0) circle [x radius= 3.35, y radius= 3.35]   ;
\draw    (221,104) -- (222,149) ;
\draw    (222,149) -- (222,231) ;
\draw [shift={(222,231)}, rotate = 90] [color={rgb, 255:red, 0; green, 0; blue, 0 }  ][fill={rgb, 255:red, 0; green, 0; blue, 0 }  ][line width=0.75]      (0, 0) circle [x radius= 3.35, y radius= 3.35]   ;
\draw [shift={(222,190)}, rotate = 90] [color={rgb, 255:red, 0; green, 0; blue, 0 }  ][fill={rgb, 255:red, 0; green, 0; blue, 0 }  ][line width=0.75]      (0, 0) circle [x radius= 3.35, y radius= 3.35]   ;
\draw [shift={(222,149)}, rotate = 90] [color={rgb, 255:red, 0; green, 0; blue, 0 }  ][fill={rgb, 255:red, 0; green, 0; blue, 0 }  ][line width=0.75]      (0, 0) circle [x radius= 3.35, y radius= 3.35]   ;
\draw    (219,23) -- (221,104) ;
\draw [shift={(221,104)}, rotate = 88.59] [color={rgb, 255:red, 0; green, 0; blue, 0 }  ][fill={rgb, 255:red, 0; green, 0; blue, 0 }  ][line width=0.75]      (0, 0) circle [x radius= 3.35, y radius= 3.35]   ;
\draw [shift={(220,63.5)}, rotate = 88.59] [color={rgb, 255:red, 0; green, 0; blue, 0 }  ][fill={rgb, 255:red, 0; green, 0; blue, 0 }  ][line width=0.75]      (0, 0) circle [x radius= 3.35, y radius= 3.35]   ;
\draw [shift={(219,23)}, rotate = 88.59] [color={rgb, 255:red, 0; green, 0; blue, 0 }  ][fill={rgb, 255:red, 0; green, 0; blue, 0 }  ][line width=0.75]      (0, 0) circle [x radius= 3.35, y radius= 3.35]   ;

\draw (146,24.4) node [anchor=north west][inner sep=0.75pt]    {$-3$};
\draw (166,54.4) node [anchor=north west][inner sep=0.75pt]    {$-2$};
\draw (192,93.4) node [anchor=north west][inner sep=0.75pt]    {$-2$};
\draw (301,24.4) node [anchor=north west][inner sep=0.75pt]    {$-3$};
\draw (262,92.4) node [anchor=north west][inner sep=0.75pt]    {$-2$};
\draw (282,62.4) node [anchor=north west][inner sep=0.75pt]    {$-2$};
\draw (247,171.4) node [anchor=north west][inner sep=0.75pt]    {$-4$};
\draw (269,203.4) node [anchor=north west][inner sep=0.75pt]    {$-5$};
\draw (198,173.4) node [anchor=north west][inner sep=0.75pt]    {$-4$};
\draw (179,203.4) node [anchor=north west][inner sep=0.75pt]    {$-5$};
\draw (113,126.4) node [anchor=north west][inner sep=0.75pt]    {$-3$};
\draw (165,126.4) node [anchor=north west][inner sep=0.75pt]    {$-2$};
\draw (268,127.4) node [anchor=north west][inner sep=0.75pt]    {$-3$};
\draw (300,127.4) node [anchor=north west][inner sep=0.75pt]    {$-6$};
\draw (231,128.4) node [anchor=north west][inner sep=0.75pt]    {$-4$};
\draw (227,24.4) node [anchor=north west][inner sep=0.75pt]    {$-3$};
\draw (225,54.4) node [anchor=north west][inner sep=0.75pt]    {$-2$};
\draw (225,93.4) node [anchor=north west][inner sep=0.75pt]    {$-2$};
\draw (223,174.4) node [anchor=north west][inner sep=0.75pt]    {$-4$};
\draw (226,203.4) node [anchor=north west][inner sep=0.75pt]    {$-5$};

\end{tikzpicture}

\caption{}
    \label{fig:ex1}
\end{figure}
\end{example}

\begin{remark}
Note that with Theorem \ref{thm:2} we have actually proved that, in the case of quotient representations of PM semigroups with convenient multiplicities, the first inclusion of (\ref{eq:sumbound}) is in fact an equality. Based on this, the question arises whether this is possible more generally or what are the limits of the argument given in the proof of Theorem \ref{thm:2}?

Unfortunately, it turns out that argument relies essentially on the two-legged form of the representations and for the general case it can not be applied.  

Indeed, consider for example the graphs $\Gamma_1$ and $\Gamma_2$ defined by $Sf_{\Gamma_1}=(-1;(15,4),5\times(15,2))$ and 
$Sf_{\Gamma_2}=(-1;(30,1),3\times(10,3))$, respectively. (Here $t\times(\al,\w)$ means there are $t$ number of legs characterized by the same pair $(\al,\w)$.) Suppose that there exists $a,b\in\Z_{\geq0}$
such that for $\Gamma:=a\Gamma_1+b\Gamma_2$ we have $\cS_{\Gamma}=\cS_{\Gamma_1}\cap\cS_{\Gamma_2}$. In this case, for the gaps of $\cS_{\Gamma}$ $\ell=15,20$ we get the following values

\begin{center}
\begin{tabular}{c|c|c}
     $\ell$&15&20  \\
     \hline
     $N_{\Gamma_1}(\ell)$&1&-1\\
     \hline
     $N_{\Gamma_2}(\ell)$&-1&1\\
     \hline
     $N(\ell)$&$a-b$&$b-a$
\end{tabular},  
\end{center}
Thus $a-b<0$ and $b-a<0$, a contradiction.

The reason is that, when $d_1=d_2=2$, 
either $|e_1|\leq|e_2|$ or $|e_2|\leq|e_1|$; after possibly swapping the indices, one may assume the first inequality.
In the general case, for similar methods to work we would 
need either $|e_1|(d_2-1)\leq|e_2|$ or $|e_2|(d_1-1)\leq |e_1|$. However, the example above shows that these inequalities can fail simultaneously.

\end{remark}

\subsection{Representations with Toms decomposition}

Let $\cS$ be an SPM numerical semigroup and now let  $\cS=\cP_1\cap\dots\cap\cP_r$ be a Toms decomposition. That is, the PM semigroups are written as  $\cP_i=\langle \al_{i1},\al_{i2}\rangle/n$ for any $i\in\{1,\dots,r\}$, $\al_{i1},\al_{i2},n$ satisfying the properties of section \ref{ss:Toms}.

In this case we have two ways to construct the representations. Indeed, either we construct the quotient representations $\Gamma_i$ associated with $\cP_i$ for every $i$ and then consider their sum with the convenient multiplicities as in section \ref{s:gamma}.

Or, we can first construct the sum  of the graphs $\Gamma_{\langle  \al_{i1}, \al_{i2}\rangle}$ with the   `same' multiplicities $q_i$ (defined analogously as in (\ref{eq:mult}) with $k_{ij}:=\lceil \al_{i1}\al_{i2}/\al_{j1}\al_{j1} \rceil$), and then take the quotient by $n$. In fact, one can see from sections \ref{s:PMquotrep} and \ref{s:gamma} that in both ways we arrive to the same representation $\Gamma$ of $\cS$. 

\begin{example}
\label{ex:405}

Consider again the semigroup $\cS = \langle 11, 14, 15, 18, 19, 21\rangle$ from Example \ref{ex:toms}, but at this time  with the Toms decomposition $\cS=\cP_1 \cap \cP_2$, where 
$\cP_1 = \langle 7, 11, 15, 19\rangle =\langle 7,19\rangle/3$ and $\cP_2 = \langle 5, 8, 11, 14 \rangle=\langle 5,14\rangle/3$. The corresponding quotient representations are $\Gamma_1$ defined by $Sf_1 = (-1, (7, 5), (19, 5))$ and $\Gamma_2$ defined by $Sf_2 = (-1, (5, 3), (14, 5))$. One calculates that $k_{12}=\lceil e_2/e_1\rceil-1=\lceil 133/70\rceil-1=1$, hence the multiplicities are $q_1=2$ and $q=1$. Therefore, we will get $\Gamma=2\Gamma_1+\Gamma_2$ as a representation for $\cS$, whose Seifert invariants are $(-3; (5,3), 2\times(7,5),(14,5), 2\times(19,5))$.

On the other hand, it can be verified, that choosing the other way of construction given by the advantage of the Toms decomposition, we would arrive first to a graph with Seifert invariants $(-3; (5,1), 2\times(7,4),(14,11), 2\times (19,8))$. Then, applying to this the quotient construction by $3$ we arrive to the same graph as in the previous way, see Figure \ref{fig:2}. 
\begin{figure}[!h]
\centering
\tikzset{every picture/.style={line width=0.75pt}} 

\begin{tikzpicture}[x=0.75pt,y=0.75pt,yscale=-1,xscale=1]

\draw    (213,129) -- (271,209) ;
\draw [shift={(271,209)}, rotate = 54.06] [color={rgb, 255:red, 0; green, 0; blue, 0 }  ][fill={rgb, 255:red, 0; green, 0; blue, 0 }  ][line width=0.75]      (0, 0) circle [x radius= 3.35, y radius= 3.35]   ;
\draw [shift={(242,169)}, rotate = 54.06] [color={rgb, 255:red, 0; green, 0; blue, 0 }  ][fill={rgb, 255:red, 0; green, 0; blue, 0 }  ][line width=0.75]      (0, 0) circle [x radius= 3.35, y radius= 3.35]   ;
\draw [shift={(213,129)}, rotate = 54.06] [color={rgb, 255:red, 0; green, 0; blue, 0 }  ][fill={rgb, 255:red, 0; green, 0; blue, 0 }  ][line width=0.75]      (0, 0) circle [x radius= 3.35, y radius= 3.35]   ;
\draw    (271,209) -- (296,246) ;
\draw [shift={(296,246)}, rotate = 55.95] [color={rgb, 255:red, 0; green, 0; blue, 0 }  ][fill={rgb, 255:red, 0; green, 0; blue, 0 }  ][line width=0.75]      (0, 0) circle [x radius= 3.35, y radius= 3.35]   ;
\draw    (242,169) -- (334,170) ;
\draw [shift={(334,170)}, rotate = 0.62] [color={rgb, 255:red, 0; green, 0; blue, 0 }  ][fill={rgb, 255:red, 0; green, 0; blue, 0 }  ][line width=0.75]      (0, 0) circle [x radius= 3.35, y radius= 3.35]   ;
\draw [shift={(288,169.5)}, rotate = 0.62] [color={rgb, 255:red, 0; green, 0; blue, 0 }  ][fill={rgb, 255:red, 0; green, 0; blue, 0 }  ][line width=0.75]      (0, 0) circle [x radius= 3.35, y radius= 3.35]   ;
\draw    (242,169) -- (186,249) ;
\draw [shift={(186,249)}, rotate = 124.99] [color={rgb, 255:red, 0; green, 0; blue, 0 }  ][fill={rgb, 255:red, 0; green, 0; blue, 0 }  ][line width=0.75]      (0, 0) circle [x radius= 3.35, y radius= 3.35]   ;
\draw [shift={(214,209)}, rotate = 124.99] [color={rgb, 255:red, 0; green, 0; blue, 0 }  ][fill={rgb, 255:red, 0; green, 0; blue, 0 }  ][line width=0.75]      (0, 0) circle [x radius= 3.35, y radius= 3.35]   ;
\draw [shift={(242,169)}, rotate = 124.99] [color={rgb, 255:red, 0; green, 0; blue, 0 }  ][fill={rgb, 255:red, 0; green, 0; blue, 0 }  ][line width=0.75]      (0, 0) circle [x radius= 3.35, y radius= 3.35]   ;
\draw    (242,169) -- (271,124) ;
\draw    (271,124) -- (320,49) ;
\draw [shift={(320,49)}, rotate = 303.16] [color={rgb, 255:red, 0; green, 0; blue, 0 }  ][fill={rgb, 255:red, 0; green, 0; blue, 0 }  ][line width=0.75]      (0, 0) circle [x radius= 3.35, y radius= 3.35]   ;
\draw [shift={(295.5,86.5)}, rotate = 303.16] [color={rgb, 255:red, 0; green, 0; blue, 0 }  ][fill={rgb, 255:red, 0; green, 0; blue, 0 }  ][line width=0.75]      (0, 0) circle [x radius= 3.35, y radius= 3.35]   ;
\draw [shift={(271,124)}, rotate = 303.16] [color={rgb, 255:red, 0; green, 0; blue, 0 }  ][fill={rgb, 255:red, 0; green, 0; blue, 0 }  ][line width=0.75]      (0, 0) circle [x radius= 3.35, y radius= 3.35]   ;
\draw    (159,52) -- (213,129) ;
\draw [shift={(213,129)}, rotate = 54.96] [color={rgb, 255:red, 0; green, 0; blue, 0 }  ][fill={rgb, 255:red, 0; green, 0; blue, 0 }  ][line width=0.75]      (0, 0) circle [x radius= 3.35, y radius= 3.35]   ;
\draw [shift={(186,90.5)}, rotate = 54.96] [color={rgb, 255:red, 0; green, 0; blue, 0 }  ][fill={rgb, 255:red, 0; green, 0; blue, 0 }  ][line width=0.75]      (0, 0) circle [x radius= 3.35, y radius= 3.35]   ;
\draw [shift={(159,52)}, rotate = 54.96] [color={rgb, 255:red, 0; green, 0; blue, 0 }  ][fill={rgb, 255:red, 0; green, 0; blue, 0 }  ][line width=0.75]      (0, 0) circle [x radius= 3.35, y radius= 3.35]   ;
\draw    (164,169) -- (242,169) ;
\draw [shift={(242,169)}, rotate = 0] [color={rgb, 255:red, 0; green, 0; blue, 0 }  ][fill={rgb, 255:red, 0; green, 0; blue, 0 }  ][line width=0.75]      (0, 0) circle [x radius= 3.35, y radius= 3.35]   ;
\draw [shift={(203,169)}, rotate = 0] [color={rgb, 255:red, 0; green, 0; blue, 0 }  ][fill={rgb, 255:red, 0; green, 0; blue, 0 }  ][line width=0.75]      (0, 0) circle [x radius= 3.35, y radius= 3.35]   ;
\draw [shift={(164,169)}, rotate = 0] [color={rgb, 255:red, 0; green, 0; blue, 0 }  ][fill={rgb, 255:red, 0; green, 0; blue, 0 }  ][line width=0.75]      (0, 0) circle [x radius= 3.35, y radius= 3.35]   ;

\draw (166,44.4) node [anchor=north west][inner sep=0.75pt]    {$-3$};
\draw (186,74.4) node [anchor=north west][inner sep=0.75pt]    {$-2$};
\draw (212,113.4) node [anchor=north west][inner sep=0.75pt]    {$-2$};
\draw (321,44.4) node [anchor=north west][inner sep=0.75pt]    {$-3$};
\draw (282,112.4) node [anchor=north west][inner sep=0.75pt]    {$-2$};
\draw (302,82.4) node [anchor=north west][inner sep=0.75pt]    {$-2$};
\draw (267,191.4) node [anchor=north west][inner sep=0.75pt]    {$-4$};
\draw (289,223.4) node [anchor=north west][inner sep=0.75pt]    {$-5$};
\draw (221,193.4) node [anchor=north west][inner sep=0.75pt]    {$-4$};
\draw (195,236.4) node [anchor=north west][inner sep=0.75pt]    {$-5$};
\draw (151,146.4) node [anchor=north west][inner sep=0.75pt]    {$-3$};
\draw (189,145.4) node [anchor=north west][inner sep=0.75pt]    {$-2$};
\draw (288,147.4) node [anchor=north west][inner sep=0.75pt]    {$-3$};
\draw (320,147.4) node [anchor=north west][inner sep=0.75pt]    {$-5$};
\draw (233,133.4) node [anchor=north west][inner sep=0.75pt]    {$-3$};
\end{tikzpicture}
\caption{}\label{fig:2}
\end{figure}
\end{example}

\begin{remark}
We start this remark with the following observation: in terms of the previous Example \ref{ex:405} the graph $\Gamma=\Gamma_1+\Gamma_2$  is also a representation for the SPM semigroup considered in previous examples.  

The point is that, in many concrete examples we can relax on the multiplicities and they can be smaller in the construction of a representation. However, this is not a general behavior, as will be justified by the next example. 
\end{remark}

\begin{example}
Consider the SPM semigroup $\cS=\langle 41,42,60,100,105,140\rangle$ and its decomposition 
$\cS=\cP_1\cap \cP_2$, where $\cP_1=\langle   20, 21\rangle=\langle 20,21\rangle/1$ and $\cP_2=\langle  6,35 \rangle=\langle 6,35\rangle/1$. 

The quotient representations $\Gamma_i$ associated with $\cP_i$ are as follows: $\Gamma_1$ is defined by the Seifert invariants $Sf_{\Gamma_1}=(-1;(20,19),(21,1))$, while $\Gamma_2$ is defined by $Sf_{\Gamma_2}=(-1;(35,29),(6,1))$. A direct calculation gives $|e_1|=1/420$, $|e_2|=1/210$ and $k_{12}=\lceil e_2/e_1\rceil-1=1$. Thus the multiplicities are
$$q_1=k_{12}+1=2 \quad \mbox{and} \quad q_2=1$$ 
which, by (\ref{SPMrep}), provides the graph  $\Gamma=2\Gamma_1+\Gamma_2$. In this case, however, $q_1\geq 2$ is necessary since $N_{\Gamma_1+\Gamma_2}(216)\geq0$, but $216\notin\cP_1\cap\cP_2$.

\end{example}

\subsection{Further remarks and questions}
We have just proved that every SPM numerical semigroup admits representations by canonical equivariant graphs defined by Seifert invariants of general type $Sf=(-d;(\al_{1},\omega_1),\dots, (\al_{2d},\omega_{2d}))$, i.e. they have $2d$ legs and $-d$ decoration on the central vertex.

Then we can ask the question whether the numerical semigroups associated with these type of graphs are always SPM semigroups? Unfortunately, the answer is no, as will be shown by the following example.

\begin{example}
Consider the graph defined by the Seifert invariants $(-2; (2,1), (3,2), (3,2), \\(7,1))$. If we compute the generators we obtain that $S_{\Gamma} = \langle 6, 21, 28 \rangle$. Moreover, by the algorithm of \cite{delgado2008} we can check that in fact it is not an SPM.
\end{example}
Motivated by this example, we end this section by posing the following questions:

\begin{itemize}
\item Is there any extra condition on the above type of graphs which would characterize those among them, whose associated semigroups are SPM?

\item How can the generators and special formulas for invariants be understood and deduced by using the underlying geometry of these representations?
\end{itemize}

\bibliographystyle{amsplain}

\end{document}